\documentclass[a4paper]{siamart220329}

% Daniele's macros
\usepackage{damacros}

%========= Michel commands ===========:
\newcommand{\stsets}[1]{\mathbb{#1}}
\newcommand{\R}{\stsets{R}}

\newcommand{\N}{\stsets{N}}

\renewcommand{\P}{\mathbf{P}}
\DeclareMathOperator{\E}{{\bf E}}
\DeclareMathOperator{\one}{{ 1\hspace*{-0.55ex}I}}

\DeclareMathOperator{\var}{Var}

\DeclareMathOperator{\TLLN}{TLLN}

\newcommand*\diff{\mathop{}\!\mathrm{d}}

\renewcommand{\P}{\mathbf{P}}

\newtheorem{assumption}{Assumption}

\DeclareMathOperator*{\argmin}{arg\,min}
%==========Divers===========:
 %small\sum for\sum\delta_x

\renewcommand{\epsilon}{\varepsilon}

\renewcommand{\phi}{\varphi}

% \newcommand{\iid}{\hbox{i.\ssp i.\ssp d.}}

% %\newcommand{\dsim}{\overset{\mathrm{D}}{\sim}}
% \newcommand{\deq}{\overset{\mathcal{D}}{=}}
% \newcommand{\vto}{\overset{{\mathrm{v}}\ }{\rightarrow}}
% \newcommand{\wto}{\Rightarrow}

\def\acknowledgementsname{Acknowledgments}
\newenvironment{acks}[1][\acknowledgementsname]{\section*{#1}}{\par}
 
\if@balayout
    \renewenvironment{acks}[1][\acknowledgementsname]%
        {%
            \vskip0.5\baselineskip
            \small
            {\noindent\normalfont\sffamily\bfseries\acknowledgementsname}\par
            \begingroup\parindent 0pt\parskip 0.5\baselineskip
        }%
        {\endgroup}
\fi
%
% Funding
% 
% \def\fundingname{Funding}
% \newenvironment{funding}[1][\fundingname]{\begin{acks}[\fundingname]}{\end{acks}}

% Sets running headers as well as PDF title and authors
\title{Poisson Hypothesis and large-population limit for networks of spiking neurons}

% % Authors: full names plus addresses.
\author{%
  Daniele Avitabile%
  \thanks{%
    Amsterdam Centre for Dynamics and Computation,
    Vrije Universiteit Amsterdam,
    Department of Mathematics,
    Faculteit der Exacte Wetenschappen,
    De Boelelaan 1081a,
    1081 HV Amsterdam, The Netherlands.
  \protect
    MathNeuro Team,
    Inria branch of the University of Montpellier,
    860 rue Saint-Priest
    34095 Montpellier Cedex 5
    France.
  \protect
  (\email{d.avitabile@vu.nl}, \url{www.danieleavitabile.com}, \url{www.amsterdam-dynamics.nl}).
  }
  \and
  Michel Davydov
  \thanks{%
    Division of Applied Mathematics, Brown University, 182 George Street, Providence, RI 02912
    \protect
    (\email{michel\_davydov@brown.edu}, \url{https://sites.google.com/view/mdavydov/home}).
    }
}
%\author{Daniele Avitabile and Michel Davydov}

\graphicspath{{./Figures/}}

\begin{document}

\maketitle

\begin{abstract}
  We study mean-field descriptions for spatially-extended networks of linear (leaky) and quadratic
  integrate-and-fire neurons with stochastic spiking times. We consider
  large-population limits of continuous-time Galves-L\"ocherbach (GL) networks with
  linear and quadratic intrinsic dynamics. We prove that that the Poisson Hypothesis
  holds for the replica-mean-field limit of these networks, that is, in a suitably-defined limit, neurons are independent with interaction times replaced by independent time-inhomogeneous Poisson
  processes with intensities depending on the mean firing rates, extending known results to networks with quadratic intrinsic dynamics and resets. Proving that the Poisson Hypothesis holds opens up the
  possibility of studying the large-population limit in these networks. We prove this
  limit to be a well-posed neural field model, subject to stochastic resets. 
\end{abstract}

\section{Introduction}\label{sec:introduction} 

The derivation of mean-field limits of networks of interacting neurons is a central
question in mathematical neuroscience
\cite{ermentroutMathematicalFoundationsNeuroscience2010,
  bressloffWavesNeuralMedia2014,
coombesNeurodynamicsAppliedMathematics2023}, 
and the subject of a continued research activity. Studying the
limiting behaviour of networks of Hodgkin--Huxley neurons with biological realism at
the microscale is mathematically and computationally challenging, and the
existing literature focuses on simplified approaches to treat this problem. Typically,
simplifications are made for the microscopic neuronal dynamic and their coupling.

Neuronal ensembles that reproduce collective firing-rate dynamics can be obtained, for
instance, in spiking neural networks, which couple neurons with an idealised
dynamics. Rather than following the action potential of a neuron during a spike,
neurons in these networks spike instantaneously, an event which determines a reset
of the action potential, and the transmission of a current to connected neurons~\cite{%
vanVreeswijk:1994fb,
vanVreeswijk:1996jf,
ermentrout1998neural,
Ermentrout1998c,
Bressloff:2000uj,
Laing:2001fc,
Ermentrout:2002dl,
Osan:2002jq,
compte2003cellular,
Osan:2004ko,
Mirollo:2006ft,
Gerstner:2008be,
montbrioMacroscopicDescriptionNetworks2015,
laingExactNeuralFields2015,
byrneNextgenerationNeuralMass2020,
avitabileBumpAttractorsWaves2023}. 

Another separate class of simplified models describes neuronal dynamics through firing rates,
rather than neuronal spikes. A neuron's repetitive firing is an emergent feature,
generated by the interplay of intrinsic dynamics and synaptic coupling. Rate models
neglect single spike dynamics and prescribe the evolution of the neuronal or
population firing rate. The resulting models are versatile tools to model collective
dynamics of ensemble of spatially localised or distributed neurons, and we refer to
\cite{ermentroutMathematicalFoundationsNeuroscience2010,
  bressloffWavesNeuralMedia2014,
coombesNeurodynamicsAppliedMathematics2023} and references therein for an overview of
models and applications.

The resulting neural mass or neural field models are heuristic descriptions of
coarse-grained neuronal activity, written as ordinary-, delayed- or
integro-differential equations. In parallel to their adoption as modelling tools,
mathematical techniques have been developed to rigorously derive equations of neural
mass of neural field type as limits of networks of coupled firing rate neurons, in
the presence of noise
\cite{
touboulNoiseInducedBehaviorsNeural2012a,
Fournier_Locherbach_2016,
ditlevsen_locherbach_17,
chevallier2019mean,ZAN_2023}.
% ,
% chevallier_et_al_19

A common framework in modelling spiking networks is to prescribe the evolution of a neuronal
state variable, and rules
for their spiking events. Informally, a network of $K$ deterministic leaky
integrate-and-fire (LIF) neurons, is written as
\begin{equation}\label{eq:LIF}
  \tau \frac{d \lambda_i}{dt}(t) 
  = f(\lambda_i(t)) + b + \sum_{j \in \NSet_K} S_{ij}(\lambda_j(t)), \qquad i \in \NSet_K 
\end{equation}
where $\lambda_i(t)$ is the dimensionless voltage of the $i$th neuron at time $t$,
$f(\lambda) = -\lambda$ models an Ohmic leakage current, $b$ a constant external
current, and $\tau$ a characteristic time scale of the
process. The terms $S_{ij}$ represent voltage-dependent currents
received from other neurons in the network. In an integrate-and-fire model, equations
of type \cref{eq:LIF} hold for times $t$ between spiking events, at which resetting
rules are invoked. If $\lambda_k(t)$ crosses a threshold value at time $\tau_k^l$,
then: (i) $\tau_k^l$ is marked as a spiking time, (ii) $\lambda_k(t)$ is
instantaneously reset to a fixed value, and (iii) a change is triggered in the
currents $\{ S_{ik} \colon i \in \NSet_K\}$. 

It is important to note that evolution equations of type \cref{eq:LIF} are also in
use when the state variable of the models are firing rates or synaptic variables,
opposed to voltages. In rate models, however, there are no resets because firing
rates describe frequencies of spiking, not spiking events.

This paper deals with the derivation of a mean field ($K \to \infty$ limit) for a
noisy, spatially-distributed network of integrate-and-fire neurons, with linear or
quadratic dynamics, and subject to resets. Spatially-extended versions of
deterministic LIF neurons \cref{eq:LIF} are known to support patterns of cortical
activity, but the rigorous derivation of a mean-field model for these systems is an open
problem (see \cite{avitabileBumpAttractorsWaves2023} and references therein). In the
past decade, mean fields for networks of \textit{quadratic} integrate-and-fire
neurons (QIFs), in which the leaky current in \cref{eq:LIF} is replaced by a quadratic term $f(\lambda)
= \lambda^2$, have been investigated using the Ott-Antonsen formalism
\cite{montbrioMacroscopicDescriptionNetworks2015,laingExactNeuralFields2015,
byrneNextgenerationNeuralMass2020}. In the latter studies the evolution is
deterministic, but each neuron receives a random background current, extracted from a
Cauchy distribution. In the present article, we will study their dynamics subject to
stochastic spiking times, which makes the Ott-Antonsen formalism not immediately
applicable.

Substantial progress has been made on stochastic networks of type \cref{eq:LIF} when
$\lambda_i(t)$ are interpreted as rates (without resets) and the spiking events are
point processes. One model
of particular relevance is the Galves-L\"ocherbach (GL) model,
that can be thought of as a network of leaky integrate-and-fire neurons with
stochastic threshold reset values \cite{Galves_2013}.
For a fixed population size $K\geq 2$, one introduces the point processes
$N_1,\ldots,N_K$, in which $N_i$ encodes the sequence of spiking times of
neuron $i \in \{ 1,\ldots,K\}$. Neurons are said to follow \textit{continuous-time
GL dynamics} with stochastic intensities $\{ \lambda_i \}_{i=1}^K$
if there exist functions $\Phi_i:\R\times\R\rightarrow\R_+$ and $g_i:\R\rightarrow\R$,
such that
\[
  \lambda_i(t)=\Phi_i\Biggl[\,\sum_{j=1}^K w(i,j)\int_{L_t(i)}^t g_i(t-s)N_j(\diff s), t-L_t(i)\Biggr],
  \qquad t \in \RSet, 
  \qquad i \in \{ 1,\ldots,N \},
\]
where $L_t(i)$ is the last spiking time of neuron $i$ before $t$ and $w(i,j)>0$
represents a synaptic weight and provides the rate increment of neuron $i$ when
neuron $j$ spikes. Stochastic intensities can be informally thought of as
instantaneous neuron firing rates. We refer to \Cref{sec:model_def} for a
formal definition.
% if their admit stochastic intensities 
% $\{ \lambda_i \}_{i = 1}^K$ such 
% such that for all $t\in \R,$ there exist
% functions $\Phi_i:\R\times\R\rightarrow\R_+$ and $g_i:\R\rightarrow\R$ such that
% \[
%   \lambda_i(t)=\Phi_i(\sum_j w(i,j)\int_{L_t(i)}^t g_i(t-s)N_j(\diff s), t-L_t(i)),
% \]
% where $L_t(i)$ is the last spiking time of neuron $i$ before $t$ and $w(i,j)>0$
% represents the synaptic weight, a.k.a. the rate incrementation of neuron $i$ when
% neuron $j$ spikes. Stochastic intensities can be informally thought of as
% instantaneous neuron firing rates. We refer to Section \Cref{sec:model_def} for a
% formal definition.
A common choice for the functions $\Phi_i$ and $g_i$ is given by
\[
\Phi_i(x,s)=x+b_i+(r_i-b_i)e^{-s/\tau_i},
\qquad 
g_i(t-s)=e^{-(t-s)/\tau_i}, 
\qquad 
b_i,r_i,\tau_i>0.
\]
This system is referred to in literature as the \textit{linear} GL model
\cite{Baccelli_2019}, owing to the linearity of $\Phi_i$ in the variable $x$.

The GL model has been primarily studied in the infinite population
limit \cite{Galves_2013}, notably to construct perfect simulation algorithms
\cite{phi_löcherbach_reynaud-bouret_2023}. However, this study relies on a classical
mean-field approach wherein the synaptic weights $w(i,j)$ are taken to be identical
and equal to $1/K.$ While allowing for a tractable description of the large
population limit in that setting, the mean field approach suffers from the
significant drawback of neglecting the network geometry to do so. An alternative
approach to obtain tractability without considering the large-population limit
has been proposed in \cite{Baccelli_2019} by Baccelli
and Taillefumier, who studied a GL network under the so-called
\textit{Poisson Hypothesis}.
Under the Poisson Hypothesis, the neuron spiking rates $\lambda_i(t)$ are independent processes, and the interaction times between neurons are replaced by independent time-inhomogeneous Poisson processes with intensities depending on the mean neuronal firing rates. Note that this does not imply that the spiking times are Poisson processes. We refer to \cref{def:GL-PH} for a formal definition of these concepts for GL networks. In \cite{Baccelli_2019}, the authors have shown that adopting the Poisson Hypothesis for
linear GL networks in the stationary regime allows for closed forms of the mean
spiking rates.

% Instead of taking a large population limit, the Poisson
% Hypothesis consists in: (i) considering the neurons to be independent and (ii)
% replacing interaction times
% between them by independent time-inhomogeneous Poisson processes, the intensities of
% which depend on the mean firing rates of the neurons. We refer to Definition
% \cref{def:GL-PH} for a formal definition of these dynamics for GL networks. In
% \cite{Baccelli_2019}, the authors have shown that adopting the Poisson Hypothesis for
% linear GL networks in the stationary regime allows for closed forms of the mean
% spiking rates.

An important aspect of the Poisson Hypothesis is that it is rarely proven formally, but
usually only conjectured heuristically or checked numerically. For the linear
GL model, it has been proved in \cite{Davydov_2024} that the Poisson
Hypothesis arises at the infinite-replica limit of an interacting particle system
known as the replica-mean-field. This system consists of $M$ copies, or replicas, of
a network of $K$ neurons following linear GL dynamics, in which
interactions between neurons are randomly routed in between replicas. Namely, when
neuron $j$ in replica $n$ spikes, for each $i\in\{1,\ldots,K\}$ such that $w(i,j)>0,$
a replica index $m$ is sampled uniformly and independently from the dynamics, and the
spiking rate of neuron $i$ in replica $m$ is then incremented by $w(i,j).$ This
random routing is at the heart of the replica-mean-field approach and gives rise to
the Poisson Hypothesis in the infinite-replica limit.
\begin{figure}
\label{fig:limit_relations}
\centering
\tikzset{every picture/.style={line width=0.75pt}} %set default line width to 0.75pt        

\begin{tikzpicture}[x=0.75pt,y=0.75pt,yscale=-1,xscale=1]
%uncomment if require: \path (0,245); %set diagram left start at 0, and has height of 245

%Straight Lines [id:da2207131390149345] 
\draw    (331,73.75) -- (331,113.75) ;
\draw [shift={(331,115.75)}, rotate = 270] [color={rgb, 255:red, 0; green, 0; blue, 0 }  ][line width=0.75]    (10.93,-3.29) .. controls (6.95,-1.4) and (3.31,-0.3) .. (0,0) .. controls (3.31,0.3) and (6.95,1.4) .. (10.93,3.29)   ;
%Straight Lines [id:da19125831057307519] 
\draw    (330,151.75) -- (330.96,199.75) ;
\draw [shift={(331,201.75)}, rotate = 268.85] [color={rgb, 255:red, 0; green, 0; blue, 0 }  ][line width=0.75]    (10.93,-3.29) .. controls (6.95,-1.4) and (3.31,-0.3) .. (0,0) .. controls (3.31,0.3) and (6.95,1.4) .. (10.93,3.29)   ;

% Text Node
\draw (343,84.4) node [anchor=north west][inner sep=0.75pt]    {$M\rightarrow \infty $};
% Text Node
\draw (255,13) node [anchor=north west][inner sep=0.75pt]   [align=left] {GL model ($K$ neurons)};
% Text Node
\draw (189,48) node [anchor=north west][inner sep=0.75pt]   [align=left] {GL
  replica-mean-field model ($K \displaystyle \times  M$ neurons)};
% Text Node
\draw (166,126) node [anchor=north west][inner sep=0.75pt]   [align=left] {GL model under the Poisson Hypothesis ($K$ neurons)};
% Text Node
\draw (206,211) node [anchor=north west][inner sep=0.75pt]   [align=left] {Neural Field model with stochastic resets};
% Text Node
\draw (347,167.4) node [anchor=north west][inner sep=0.75pt]    {$K\rightarrow \infty $};
% Text Node
\draw (237,84) node [anchor=north west][inner sep=0.75pt]   [align=left] {\textbf{\cref{th:RMF_PoC}}};
% Text Node
\draw (235,168) node [anchor=north west][inner sep=0.75pt]   [align=left] {\textbf{\cref{th:mf_limit}}};

\end{tikzpicture}

\caption{Relations between interacting particle systems}
\end{figure}
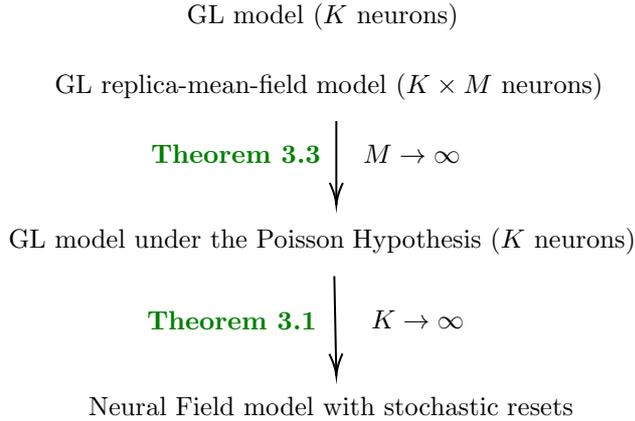

In the present paper we rigorously prove the existence of a mean-field limit for
a network with the following characteristics:
\begin{enumerate}
  \item the intrinsic dynamic of the neuron is of leaky or quadratic type;
  \item the firing times $\tau_k^l$ are given by point processes;
  \item the network is spatially-extended;
  \item neurons in the network are subject to resets.
\end{enumerate}
The derived mean-field resembles a neural field model. To the best of our knowledge,
results of this type are available in the literature only for systems without resets,
and with a linear (leaky) dynamic \cite{chevallier2019mean}.

Our approach differs from classical mean-field results, in that we proceed as
follows: (i) we consider the replica-mean-field version of network dynamics of
interest, (ii) we prove that the Poisson Hypothesis holds in the infinite-replica
limit and, (iii) we proceed with taking the large population limit. 

Because the Poisson Hypothesis holds, neurons in the network are independent, and
this completely circumvents the main technical difficulty in classical mean-field
approaches, namely the proof of asymptotic independence (sometimes referred to as
propagation of chaos) in the large population limit. The main technical contribution
of the paper is a proof that the Poisson Hypothesis holds for networks with quadratic
integrate-and-fire dynamics, which extends the class of models for which the Poisson Hypothesis
holds \cite{davydov2023rmfcFIAP}. 

A schematic of the various interacting particle systems introduced in the paper,
together with the main results is given in \cref{fig:limit_relations}. We introduce
the models of interest in \Cref{sec:model_def}, state our main results in
\Cref{sec:main_result}, and give proofs in \Cref{sec:proof}.

\section{Formal model definition}
\label{sec:model_def}
The goal of this section is to introduce a neural-field-type equation incorporating
stochastic resets with either linear or quadratic autonomous dynamics and to prove
that solutions to this equation can be approximated by an interacting particle system
that we introduce later. This interacting particle system consists of particles evolving
according to Galves-L\"ocherbach dynamics under the Poisson Hypothesis.

To introduce this dynamics, for the finite-dimensional interacting particle system with $K\geq 2$ particles, we introduce the network history  $(\mathcal{F}^K_t)_{t\in \mathbb{R}}$ as an increasing collection of $\sigma$-fields such that
\begin{equation*}
\mathcal{F}_t^\mathbf{N}=\{\sigma(N_1(B_1),...,N_K(B_K))|B_i\in \mathbf{B}(\mathbb{R}), B_i \subset (-\infty,t]\}\subset \mathcal{F}^K_t,
\end{equation*}
where $\mathcal{F}_t^\mathbf{N}$ is the internal history of the $K-$dimensional vector point process $\mathbf{N}=(N_1,\ldots,N_K)$. For the infinite-dimensional system, the network history $(\mathcal{F}_t)_{t\in \mathbb{R}}$ is defined similarly as a collection of $\sigma$-fields such that

\begin{equation*}
\biggl\{\sigma(\bigcup_{k \in \N} N_k(B_k)|B_i\in \mathbf{B}(\mathbb{R}), B_i \subset (-\infty,t]\biggr\}\subset \mathcal{F}^K_t.
\end{equation*}

Recall that the $\mathcal{F}_t$-stochastic intensity $\{\lambda(t)\}_{t \in \mathbb{R}}$ of an associated point process $N$ is the $\mathcal{F}_t$-predictable random process satisfying for all $s<t \in \mathbb{R}:$
\begin{equation*}
\E\left[N(s,t]|\mathcal{F}_s\right]=\E\left[\int_s^t \lambda(u)\diff u\big|\mathcal{F}_s\right],
\end{equation*}
where $\mathcal{F}_t$ is the network history.

Given an interval $\mathcal{S} \subset \R$  and $\mathcal{M}$ a metric space, let $\mathcal{D}(\mathcal{S}:\mathcal{M})$ be the space of càdlàg functions equipped with the Skorokhod topology. Throughout, we write $\mathcal{D}:=\mathcal{D}([0,\infty):\R])$.

We now start by defining the spatially-extended mean-field limit model, the well-posedness of which is the main result of this work. We call this model a neural-field model with stochastic resets.

\begin{definition}[Neural-field model with stochastic resets]
\label{def:NFE_w_resets}
Let $D$ be a bounded subset of $\R$ and let $\tau, T>0.$  Let $w:D\times D \rightarrow \R$ be a continuous function representing the interaction weights, and let $r:D\rightarrow \R^+$ be a measurable function representing the reset values of the neurons. Let $\Phi:\R_+\rightarrow \R$ be a measurable function representing the autonomous evolution of the dynamics in the absence of interactions or resets.
Let $(N_{x})_{x \in D}$ be point processes admitting $t\rightarrow \lambda(x,t)$ as stochastic intensities, where $ \lambda(x,t)$ solves
\begin{equation}
\label{eq:NFE_with:resets}
\begin{split}
\lambda(x,t)=&\lambda(x,0)+\frac{1}{\tau}\int_0^t\Phi(\lambda(x,s))\diff s+\int_D\int_0^t w(x,y)\E[\lambda(y,s)]\diff s\diff y\\
&+\int_0^t\left(r(x)-\lambda(x,s)\right) N_{x}(\diff s), 
\end{split}
\end{equation}    
with $\lambda(x,0)$ being r.v.s satisfying $\E[\lambda(x,0)]<\infty$ for all $x\in D.$
\end{definition}

In this work, we consider two separate regimes for the autonomous evolution dynamics, encapsulated in the following assumption:

\begin{assumption}
\label{Ass:A}
The autonomous evolution function $\Phi:\R_+\rightarrow\R$ from \cref{def:NFE_w_resets} defines either:
\begin{enumerate}
    \item leaky autonomous evolution: fix $b>0$, and let $\Phi(x)=b-x, x\in \R_+$;
    \item quadratic autonomous evolution: fix $b>0$, and let $\Phi(x)=b+x^2, x\in \R_+$.
\end{enumerate}
\end{assumption}

Our goal is to show that there exists a well-posed interacting particle system (IPS) consisting of $K$ particles, or neurons, evolving according to either Galves-L\"ocherbach or quadratic dynamics converging when $K\rightarrow\infty$ to the neural-field dynamics with resets defined by \eqref{eq:NFE_with:resets}. For that purpose, we will have to consider it in the \textit{Poisson Hypothesis} regime, namely, we suppose that the neurons are stochastically independent and hypothetical interaction times between neurons are replaced by independent inhomogeneous Poisson processes, with intensities given by the mean spiking rates of the neurons. 

\begin{definition}[GL dynamics under the Poisson Hypothesis]
\label{def:GL-PH}
Let $D_K$ be a subset of $D$ such that $|D_K|=K.$ Let $(\hat{N}^K_x)_{x \in D_K}$ be a collection of independent Poisson point processes with respective intensities $t\rightarrow \E[\lambda^K(x,t)],$ where $(\lambda^K(x,t))_{x \in D_K}$ are the stochastic intensities of point processes $N^K_{x}$ solving
\begin{equation}
\label{eq:spatial_GL_PH}
\begin{split}
\lambda^K(x,t)=&\lambda^K(x,0)+\frac{1}{\tau}\int_0^t\Phi(\lambda^K(x,s))\diff s+\frac{1}{K-1}\sum_{y \in D_K\setminus\{x\}}w(x,y)\int_0^t \hat{N}^K_{x}(\diff s)\\
&+\int_0^t\left(r(x)-\lambda^K(x,s)\right) N^K_{x}(\diff s),
\end{split}
\end{equation}
where $\Phi$ satisfies \cref{Ass:A} and $\E[\lambda^K(x,0)]<\infty$ for all $x\in D$.
\end{definition}

The Poisson Hypothesis regime is often conjectured, or validated only numerically. For the leaky autonomous evolution, the Poisson Hypothesis is known to be valid for the replica-mean-field limit of GL dynamics, see \cite{Davydov_2024}. In this work, we prove that it is as well the case for the quadratic autonomous evolution. This is the first proof of the Poisson Hypothesis for network dynamics outside the continuous-time fragmentation-interaction-aggregation framework introduced in \cite{davydov2023rmfcFIAP}.

\section{Main results}
\label{sec:main_result}
In this section, we state the two main results of this work: the derivation of the mean-field limit of GL dynamics under the Poisson Hypothesis in both the leaky and quadratic autonomous evolution settings, and the validity of the Poisson Hypothesis for the replica-mean-field limit of GL dynamics with quadratic autonomous evolution, and the validity of the Poisson Hypothesis for the replica-mean-field limit of GL dynamics with quadratic autonomous evolution.

The first result concerns the large-population limit of the dynamics under the Poisson HypothesisWe want to show that when $K \rightarrow \infty,$ these dynamics converge to neural-field-equation-type dynamics defined in \eqref{eq:NFE_with:resets}, with the averaging factor $\frac{1}{K-1}$ introduced for that purpose. 

\begin{theorem}
\label{th:mf_limit}
Let $(x,t)\in D\times [0,T].$ Let $(D_l(x))_{l\geq 1}$ be a sequence of nested subsets of $D$ such that $|D_l(x)|=l$ that densely fill $D,$ that is, for any $l, D_l(x) \subset D_{l+1}(x)$ and $\bigcup_{l=1}^{\infty} D_l(x)=D$ and such that $x \in D_1(x).$
When $K \rightarrow \infty,\lambda^K_x(t)$ defined by $\eqref{eq:spatial_GL_PH}$  converges in probability to $\lambda(x,t)$ defined by $\eqref{eq:NFE_with:resets}.$  
\end{theorem}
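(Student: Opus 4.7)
The plan is to exploit the fact that, under the Poisson Hypothesis, the processes $(\lambda^K(x,\cdot))_{x\in D_K}$ are stochastically independent and coupled only through the deterministic mean intensity functions $\mu^K(y,s):=\E[\lambda^K(y,s)]$. The argument then reduces to (i) convergence of the deterministic functions $\mu^K$ to $\mu(y,s):=\E[\lambda(y,s)]$ on $D\times[0,T]$, and (ii) process-level stability of the single-neuron SDE under perturbations of its driving intensities. Because independence is built in, no propagation-of-chaos argument is needed and convergence in probability should follow from a Gr\"onwall-type estimate applied to an $L^1$ bound on $\lambda^K(x,t)-\lambda(x,t)$.

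The first step is to establish uniform-in-$K$ bounds on $\mu^K(x,t)$ and $\E[\lambda^K(x,t)^2]$ on $[0,T]$, and simultaneously to prove well-posedness of \eqref{eq:spatial_GL_PH} and \eqref{eq:NFE_with:resets}. In the leaky case this follows from Dynkin's formula and Gr\"onwall. In the quadratic case, the reset term $\int_0^t(r(x)-\lambda^K(x,s))N^K_x(\diff s)$ provides the crucial damping: since $N^K_x$ has stochastic intensity $\lambda^K(x,s)$, compensation produces a drift contribution of order $-\lambda^K(x,s)^2$ that balances the superlinear $b+\lambda^K(x,s)^2$ from $\Phi$. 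Next, taking expectations in \eqref{eq:spatial_GL_PH} and \eqref{eq:NFE_with:resets} and setting $\Delta^K(x,t):=\mu^K(x,t)-\mu(x,t)$, one obtains an inequality of the form
\[
|\Delta^K(x,t)|\le C\int_0^t\Bigl(|\Delta^K(x,s)|+\int_D|w(x,y)|\,|\Delta^K(y,s)|\,\diff y\Bigr)\diff s + R_K(x,t),
\]
where $R_K$ collects a Riemann-sum error and nonlinear remainders controlled via the moment bounds. Continuity of $w$ and $\mu$ together with the assumption that $D_l(x)$ densely fills $D$ forces $R_K\to 0$ uniformly in $(x,t)\in D\times[0,T]$, and a Gr\"onwall argument in the $L^\infty(D)$ norm yields $\sup_{x,t}|\Delta^K(x,t)|\to 0$.

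With the convergence of mean intensities in hand, I would couple $\lambda^K(x,\cdot)$ to $\lambda(x,\cdot)$ on a common probability space through the Poisson embedding technique: unit-intensity Poisson random measures on $[0,T]\times\R_+$ generate both the interaction clocks and the reset clocks in parallel for the $K$-particle and limit systems. The difference $|\lambda^K(x,t)-\lambda(x,t)|$ then satisfies a stochastic integral inequality whose inhomogeneous term splits into the vanishing Riemann-sum error, an $L^1$ mean-difference contribution from the previous step, and a compensated Poisson martingale whose $L^2$ norm is of order $\sum_{y\in D_K\setminus\{x\}}w(x,y)^2\mu^K(y,s)/(K-1)^2$; this vanishes as $K\to\infty$ since $w$ is bounded on the compact closure of $D\times D$. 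A final Gr\"onwall argument delivers convergence in $L^1$, hence in probability, of $\lambda^K(x,t)$ to $\lambda(x,t)$. The main obstacle I anticipate is the quadratic moment bound in the first step, where one must show that the reset compensation dominates the superlinear drift \emph{uniformly in $K$}; a secondary difficulty is that the Gr\"onwall constants in the quadratic case may grow exponentially in $T$, so the vanishing of the compensated Poisson fluctuations must be quantified sharply enough to survive that growth.
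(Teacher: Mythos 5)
Your route is genuinely different from the paper's. The paper isolates the interaction term $A^K(x,t)=\sum_{y\in D_K\setminus\{x\}}\tfrac{w(x,y)}{K-1}\int_0^t\hat N^K_y(\diff s)$, observes that it is a sum of independent scaled Poisson counts, and applies the triangular-array law of large numbers (\cref{lem_lln_array}): the mean converges to $\int_0^t\int_D w(x,y)\E[\lambda(y,s)]\diff s\diff y$ because the empirical measure of $D_K(x)$ converges weakly to Lebesgue measure on $D$ and $w$ is bounded and continuous, while the second moment is $O((K-1)^{-1})$ and vanishes; convergence of $\lambda^K(x,t)$ itself is then deduced by an appeal to the mapping theorem. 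Your two ingredients for the interaction term --- the Riemann-sum error in the compensator and the $\sum_y w(x,y)^2\mu^K(y,s)/(K-1)^2$ bound on the compensated martingale --- are exactly those same two computations in different packaging; what you add is an explicit Poisson-embedding coupling and a Gr\"onwall estimate in place of the paper's (rather terse) continuity-of-the-solution-map step. Your version is more quantitative and would in principle produce a rate; the paper's is shorter but leaves the transfer from $A^K$ to $\lambda^K$ implicit.

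There is, however, one structural problem in your ordering of steps. You propose to first prove $\mu^K\to\mu$ by taking expectations in \eqref{eq:spatial_GL_PH} and \eqref{eq:NFE_with:resets} and running Gr\"onwall on $\Delta^K=\mu^K-\mu$. But the expectation of the reset term $\int_0^t(r(x)-\lambda(x,s))N_x(\diff s)$ is $\int_0^t\bigl(r(x)\E[\lambda(x,s)]-\E[\lambda(x,s)^2]\bigr)\diff s$, and in the quadratic case $\E[\Phi(\lambda)]=b+\E[\lambda^2]$ as well, so the equation for the first moments is not closed: the ``nonlinear remainders'' you relegate to $R_K$ contain $\E[\lambda^K(x,s)^2]-\E[\lambda(x,s)^2]$, which uniform-in-$K$ moment bounds make finite but not small. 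Controlling that difference already requires the pathwise coupling of your third step, so steps two and three cannot be run sequentially; they must be merged into a single Gr\"onwall estimate on $\E\left[|\lambda^K(x,t)-\lambda(x,t)|\right]$ (using $|\Delta^K|\le\E\left[|\lambda^K-\lambda|\right]$), together with a localization/stopping argument giving almost-sure bounds on the intensities so that $x\mapsto x^2$ becomes effectively Lipschitz on the relevant event. With that reorganization your argument goes through; as written, the second step is circular.
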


Compared with classical mean-field frameworks, the Poisson Hypothesis allows for a much simpler analysis, as the $K$ particles are independent. Thus, the proof of propagation of chaos, a classical component of such results, boils down to a law of large numbers for triangular arrays, detailed in \Cref{sec:proof_mf_limit}

Thus, the main technical difficulty of the current work lies in establishing the Poisson Hypothesis. By that, we mean introducing an interacting particle system such that, when properly scaled, it converges to the Poisson Hypothesis dynamics \eqref{eq:spatial_GL_PH}. When the autonomous evolution function from \cref{Ass:A} is leaky, this is already known: in \cite{Davydov_2024}, it is shown that the Poisson Hypothesis arises at the replica-mean-field limit of GL dynamics. The open question solved by the next theorem is the generalization of that result to the case where the autonomous evolution function from \cref{Ass:A} is quadratic.

We start by rigorously introducing replica-mean-field dynamics of GL neurons.

\begin{definition}[Replica-mean-field dynamics of GL neurons]
\label{def:RMF_dynamics}    
Let $M\geq 2.$ Let $D_K$ be a subset of $D$ such that $|D_K|=K.$ The $M-$replica quadratic Galves-L\"ocherbach dynamics on $D_K$ are a collection of $MK$ point processes $(N^{M,K}_{m,x})_{1\leq m \leq M,1\leq i\leq K}$ admitting $\mathcal{F}^K_t-$stochastic intensities $\lambda_{m}(x,t)$ satisfying the following system of equations:
\begin{equation}
\label{eq:spatial_GL_RMF}
\begin{split}
\lambda^{M,K}_m(x,t)=&\lambda^{M,K}_m(x,0)+\frac{1}{\tau}\int_0^t\Phi(\lambda^{M,K}_m(x,s))\diff s\\
&+\frac{1}{K-1}\sum_{y \in D_K\setminus\{x\}}\sum_{n\neq m}w(x,y)\int_0^t \one_{\{V^M_n(y,x,s)=m\}} N^{M,K}_{n,y}(\diff s)\\
&+\int_0^t\left(r(x)-\lambda^{M,K}(x,s)\right) N^{M,K}_{m,x}(\diff s), 
\end{split}
\end{equation}
where $\{V^M_n(y,x,t)\}_{t\in \mathbb{R}}$ are $(\mathcal{F}_t)$-predictable processes for $1 \leq m \leq M, x,y \in D_K,$ such that for each  each point $T^{K,M}_{n,y}$ of the process $N^{M,K}_{n,y}$, the real-valued random variables $\{V^M_n(y,x,t,T^{K,M}_{n,y})\}_y$ are mutually independent, independent from the past, i.e. from $\mathcal{F}_s$ for $s<T^{K,M}_{n,y},$ and uniformly distributed on $\{1,...,M\}\setminus\{n\},$ and
$\lambda^{M,K}_m(x,0)$ are $M-$exchangeable random variables with finite first moment.
\end{definition}

This allows us to state our second main theorem:
\begin{theorem}[Propagation of chaos for RMF dynamics]
\label{th:RMF_PoC}
For $t\geq 0$, there exists $C>0$ such that for all $m \in \{1,\ldots,M\}$ and all $x\in D_K$,
\begin{equation*}
    d_{TV}(\lambda^{M,K}_m(x,t),\lambda^K(x,t))\leq \frac{C}{\sqrt{M}},
\end{equation*}
where $\lambda^{M,K}_m(x,t)$ is defined by \eqref{eq:spatial_GL_RMF} and $\lambda^K(x,t)$ is defined by \eqref{eq:spatial_GL_PH}.
\end{theorem}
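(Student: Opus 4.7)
The plan is to compare the laws of $\lambda^{M,K}_m(x,t)$ and $\lambda^K(x,t)$ via a synchronous coupling on a common probability space and then apply a Gronwall-type closure on the $L^1$-difference of stochastic intensities, exploiting the identity (of Kabanov-Liptser-Shiryaev type) that the total-variation distance between the laws of two point processes with stochastic intensities $\lambda^{(1)},\lambda^{(2)}$ is controlled by $\E\int_0^t |\lambda^{(1)}(s)-\lambda^{(2)}(s)|\diff s$, and hence so is the TV-distance between their intensities at time $t$ (which can be expressed through their compensators).

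First, I would build a coupling: attach to each neuron $x$ and each replica index $m$ a pair of independent Poisson random measures on $[0,T]\times\R_+$, one governing spikes (via thinning by the current intensity) and one governing interaction events. The same noise drives the PH dynamics (with the routing mechanism replaced by its annealed version using $\E[\lambda^K(y,s)]$), so that the two systems differ only through the replica-routing bookkeeping and through the drift. Second, I would establish the key \emph{uniform moment bounds} $\sup_{M,K}\sup_{t\leq T}\E[\lambda^{M,K}_m(x,t)^p]<\infty$ for $p=1,2$ in both systems. In the leaky case this is immediate via Gronwall; in the quadratic case this is the main obstacle, since $\Phi(x)=b+x^2$ is not globally Lipschitz and a naive bound fails. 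The key observation that must be exploited is that the reset term $(r(x)-\lambda^{M,K}_m(x,s))N^{M,K}_{m,x}(\diff s)$ fires at rate $\lambda^{M,K}_m(x,s)$ itself, so that the compensated drift of $\E[\lambda^{M,K}_m(x,s)]$ contains a negative $\E[\lambda^{M,K}_m(x,s)(\lambda^{M,K}_m(x,s)-r(x))]$ term that more than balances the $\E[(\lambda^{M,K}_m(x,s))^2]$ from $\Phi$, provided $r(x)$ is suitably positive; a stopping-time localization on level sets $\{\lambda\leq R\}$ combined with a bootstrap on $\E[\lambda^2]$ should produce the desired a~priori bound.

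Third, using $M$-exchangeability of the replicas to reduce to $m=1$, I would compute the compensator of the interaction term in (\ref{eq:spatial_GL_RMF}). Since the routing variables $V^M_n(y,x,\cdot)$ are uniform on $\{1,\dots,M\}\setminus\{n\}$ and independent of $\mathcal{F}_{s-}$, the $\mathcal{F}_t$-compensator of $\sum_{n\neq 1}\one_{\{V^M_n(y,x,s)=1\}} N^{M,K}_{n,y}(\diff s)$ equals $\frac{1}{M-1}\sum_{n\neq 1}\lambda^{M,K}_n(y,s)\diff s$, which has mean $\E[\lambda^{M,K}_1(y,s)]\diff s$ by exchangeability, matching the compensator $\E[\lambda^K(y,s)]\diff s$ of the Poisson interaction process in (\ref{eq:spatial_GL_PH}). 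The two stochastic integrals therefore differ by a martingale plus the discrepancy $\frac{1}{M-1}\sum_{n\neq 1}(\lambda^{M,K}_n(y,s)-\E[\lambda^{M,K}_1(y,s)])$. A direct $L^2$ estimate on the martingale part, using the moment bounds and the fact that the routing-induced variance scales as $1/M$, yields a fluctuation of order $M^{-1/2}$.

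Fourth, I would close the argument: writing $\Delta(x,t):=\E|\lambda^{M,K}_1(x,t)-\lambda^K(x,t)|$, subtract (\ref{eq:spatial_GL_PH}) from (\ref{eq:spatial_GL_RMF}) under the coupling, take expectations, and bound the drift term using that $|\Phi(u)-\Phi(v)|\leq(|u|+|v|+1)|u-v|$ in the quadratic case, together with the moment bounds. This produces an integral inequality $\Delta(x,t)\leq C M^{-1/2}+C\int_0^t\!\int_D |w(x,y)|\Delta(y,s)\diff y\diff s$, and a standard Gronwall argument in the joint $(x,t)$ variable (using $w$ continuous on the compact $D\times D$) yields $\sup_{x\in D}\Delta(x,t)\leq C_t/\sqrt{M}$. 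Finally, converting the $L^1$-bound on intensities into the announced TV-bound via the intensity-representation inequality completes the proof. The delicate step, and the only genuinely new obstacle compared to \cite{Davydov_2024}, is the uniform second-moment bound in the quadratic regime, where the interplay between the super-linear drift and the reset-driven damping must be quantified explicitly.
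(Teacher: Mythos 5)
Your strategy (synchronous coupling plus a Gronwall closure on $\E|\lambda^{M,K}_1-\lambda^K|$) is genuinely different from the paper's, which never couples the two systems directly: the paper bounds the TV distance of the routed interaction increments to a Poisson variable via the Chen--Stein method, reduces the problem to a triangular law of large numbers for the counts $N^{M,K}_{1,y}([0,t))$, and handles the quadratic nonlinearity by coupling pathwise with auxiliary \emph{threshold} dynamics (deterministic reset at level $C$) justified by the tail bound $\P(\lambda^{M,K}_m(x,t)>L)\leq (1+L^2)^{-1/2}$ obtained from the Poisson embedding and the explicit $\tan$ solution of $\dot\lambda=\lambda^2$. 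Unfortunately, your route has a gap at exactly the step you flag as delicate: the uniform second-moment bound in the quadratic regime is not just hard, it is false. The reset mechanism gives, for the $p$-th moment, a compensator contribution $(r^p-\lambda^p)\lambda$, i.e.\ a damping term $-\E[\lambda^{p+1}]$, against a drift contribution $\tfrac{p}{\tau}\E[\lambda^{p+1}]$ from $\Phi(\lambda)=b+\lambda^2$; the cancellation you invoke closes only for $p\leq\tau$, so it yields the first moment (as in \cref{lem:finite_mean}) but not the second. Worse, for the pure quadratic-with-reset dynamics one can compute the tail at a typical time to be $\P(\lambda>L)\asymp L^{-2}$, so $\E[\lambda]<\infty$ but $\E[\lambda^2]=\infty$. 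Consequently the estimate $|\Phi(u)-\Phi(v)|\leq(|u|+|v|+1)|u-v|$ cannot be integrated against second moments, and your Gronwall loop does not close. Any correct argument must avoid second moments, which is precisely why the paper truncates via the threshold dynamics and the $\tan$-based tail bound rather than running a moment-based Gronwall estimate.

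A second, independent gap is your assertion that the fluctuation $\frac{1}{M-1}\sum_{n\neq 1}\bigl(\lambda^{M,K}_n(y,s)-\E[\lambda^{M,K}_1(y,s)]\bigr)$ is $O(M^{-1/2})$ in $L^1$ because ``the routing-induced variance scales as $1/M$.'' The replicas are exchangeable but \emph{not} independent --- they interact through the routing variables $V^M_n$ --- so the variance of the empirical mean is $\frac{1}{M-1}\var(\lambda_1)+\mathrm{Cov}(\lambda_1,\lambda_2)+o(1)$, and the covariance term must itself be shown to be $O(M^{-1/2})$. This decorrelation statement is exactly condition \eqref{eq:TLLN_cFIAP_2} of the TLLN in \cref{def:TLLN}, and establishing it is the substantive content of the fixed-point argument in \cite{davydov2023rmfcFIAP} that the paper imports for the thresholded system. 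Treating it as automatic assumes the conclusion. (A more minor point: the final passage from an $L^1$ coupling bound on the intensities to the stated TV bound needs care, since $L^1$/Wasserstein control does not imply TV control for general random variables; one must bound the TV distance at the level of the driving histories.)
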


The proof of this result is detailed in \Cref{sec:proof_RMF_POC}.

\section{Proofs}
\label{sec:proof}

\subsection{Proof of \cref{th:mf_limit}}
\label{sec:proof_mf_limit}

Using notation from \eqref{eq:spatial_GL_PH}, for $y \in D_K\setminus \{x\},$ let
\begin{equation}
\label{eq:def:summand}
A^K(x,y,t):=\frac{w(x,y)}{K-1}\int_0^t \hat{N}^K_{y}(\diff s),
\end{equation}
and let

\begin{equation}
\label{eq:emp_mean}
A^K(x,t):=\sum_{y \in D_K(x)\setminus \{x\}}A^K(x,y,t)=\sum_{y \in D_K(x)\setminus \{x\}}\frac{w(x,y)}{K-1}\int_0^t \hat{N}^K_{y}(\diff s).
\end{equation}

From the mapping theorem, it is clear that to prove the theorem, it is sufficient to prove the convergence of $A^K(x,t)$ when $K\rightarrow \infty$, which is equivalent to proving a law of large numbers.

Since the processes $(\hat{N}_x)_{x \in D}$ are independent, we will apply the following law of large numbers for arrays of rowwise independent random variables, reformulated from \cite{HuMoriczTaylor89}[Theorem 2];
\begin{lemma}
\label{lem_lln_array}
For $n\in \N,$ let $(X_{n,j})_{1\leq j \leq n}$ be a triangular array of rowwise independent random variables. Let $S_n=\sum_{j=1}^n (X_{n,j}-\E[X_{n,j}]).$
Suppose that
\begin{enumerate}
    \item There exists $a\in \R$ such that, when $n\rightarrow \infty,$
    \begin{equation}
    \label{eq:slln_cond1}
    \sum_{j=1}^n \E[X_{n,j}]\rightarrow a;
    \end{equation}
    \item When $n\rightarrow \infty,$
    \begin{equation}
    \label{eq:slln_cond2}
    \var(S_n)\rightarrow 0.
    \end{equation}
\end{enumerate}
Then $S_n\xrightarrow{\P} 0$ when $n\rightarrow \infty,$ where $\xrightarrow{P}$ denotes convergence in probability.
\end{lemma}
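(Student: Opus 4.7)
The plan is to invoke Chebyshev's inequality directly, because the statement as given reduces to the most elementary weak law of large numbers for independent sums. First I would observe that $S_n$ is already mean-zero by construction: $\E[S_n]=0$ since each summand $X_{n,j}-\E[X_{n,j}]$ has zero expectation. Rowwise independence across $j$ (for each fixed $n$) ensures the variance decomposition $\var(S_n)=\sum_{j=1}^n \var(X_{n,j})$, although this is not even strictly needed, since hypothesis~(2) is phrased directly in terms of $\var(S_n)$ itself.

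Next I would apply Chebyshev's inequality to the centered variable $S_n$: for any $\epsilon>0$,
\[
\P(|S_n|>\epsilon)\leq \frac{\var(S_n)}{\epsilon^2}.
\]
By hypothesis~(2), the right-hand side tends to $0$ as $n\to\infty$, which gives $S_n\xrightarrow{\P} 0$ and completes the proof.

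The main obstacle is essentially none; the proof is a one-line Chebyshev argument, which is why I would not expand it further. It is however worth recording two remarks that explain the shape of the statement. \emph{First}, hypothesis~(1) is not used in establishing $S_n\xrightarrow{\P}0$ — its role is downstream: in \Cref{sec:proof_mf_limit} one applies the lemma to the summands $X_{n,j}=A^K(x,y_j,t)$ from \eqref{eq:def:summand} and then needs $\sum_j \E[X_{n,j}]\to a$ in order to upgrade convergence of the centered sum $S_n$ to convergence of the uncentered sum $\sum_j X_{n,j}$ to the deterministic limit $a$, via Slutsky. \emph{Second}, the original Hu--Moricz--Taylor result treats a.s.\ (strong) convergence under sharper moment or maximal-inequality hypotheses; the present reformulation extracts only the convergence-in-probability consequence, for which the variance-based bound is both elementary and sufficient.
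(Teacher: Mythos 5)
Your proof is correct. The paper itself does not prove \cref{lem_lln_array} at all: it is stated as a reformulation of \cite{HuMoriczTaylor89}[Theorem 2] and used as a black box, so your self-contained Chebyshev argument is, if anything, more informative than what the paper provides. The argument is sound: $S_n$ is centered by construction, $\P(|S_n|>\epsilon)\leq \var(S_n)/\epsilon^2$, and hypothesis~(2) sends the right-hand side to zero, giving convergence in probability without any appeal to rowwise independence or to hypothesis~(1). Your two remarks are also accurate and worth keeping: condition~(1) plays no role in the stated conclusion and only serves, in \Cref{sec:proof_mf_limit}, to identify the limit of the uncentered sum $A^K(x,t)=S_K+\sum_j\E[X_{K,j}]$; and the cited Hu--M\'oricz--Taylor theorem concerns almost-sure convergence under stronger hypotheses, of which the present statement extracts only the in-probability consequence, for which Chebyshev suffices. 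One minor caveat: the hypothesis $\var(S_n)\to 0$ implicitly presumes the second moments exist (otherwise the condition is vacuous), which is worth noting since the lemma is applied to Poisson integrals whose second moments must be checked, as the paper indeed does.
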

We now proceed to prove \cref{th:mf_limit}.
\begin{proof}
Fix $k\in \N$ and let
\begin{equation*}
\mu_K(x):=\sum_{y \in D_K(x)\setminus \{x\}} \delta_y.
\end{equation*}
Then, we have
\begin{equation*}
%\E\big[\sum_{y \in D_K(x)\setminus \{x\}}A^K(x,y,t)\big]=
\E[A^K(x,t)]=\frac{1}{K-1}\int_0^t\int_D w(x,y)\E[\lambda(y,s)] \mu_K(x)(\diff y)\diff s.
\end{equation*}

Since $D$ is bounded and by definition of $(D_K(x))$, we have $\mu_K(x) \Rightarrow \lambda_D,$ where $\lambda_D$ is the Lebesgue measure on $D$ and $\Rightarrow$ denotes weak convergence.

Since $w$ is bounded and continuous, from \cite{billingsley1968}[Theorem 25.8], we have
\begin{equation*}
\E[A^K(x,t)] \rightarrow \int_0^t\int_D w(x,y)\E[\lambda(y,s)]\diff s \diff y
\end{equation*}
when $K\rightarrow \infty,$
and therefore, condition 1 of \cref{lem_lln_array} is satisfied.

Now, note that
\begin{equation*}
\E[(A^K(x,t))^2]=\frac{1}{(K-1)^2}\int_0^t\int_D w^2(x,y)\E[\lambda(y,s)] \mu_K(x)(\diff y)\diff s.
\end{equation*}
Since $w$ is a continuous function on $D$ and $D$ is bounded, $\E[(A^K(x,t))^2]\rightarrow 0$ when $K \rightarrow \infty.$ Condition 2 from \cref{lem_lln_array} then follows, as does in turn the theorem.
\end{proof}

\subsection{Proof of \cref{th:RMF_PoC}}
\label{sec:proof_RMF_POC}

\cref{th:RMF_PoC} is inspired by an equivalent result for continuous-time fragmentation-interaction-aggregation processes (cFIAP) introduced in \cite{davydov2023rmfcFIAP}. However, when the autonomous evolution function from \cref{Ass:A} is quadratic, the resulting dynamics do not belong to the cFIAP class as the quadratic autonomous evolution is increasing between two fragmentation times. As such, the goal of this section is to show that the main result of \cite{davydov2023rmfcFIAP}, namely the convergence of the $M-$replica-mean-field model to dynamics under the Poisson Hypothesis as $M\rightarrow\infty,$ extends to this quadratic model.

The proof of a result equivalent to \cref{th:RMF_PoC} in \cite{davydov2023rmfcFIAP} relies on the following approach: the total variation distance between the law of the interaction term between two nodes in two distinct replicas and the law of a Poisson r.v. is bounded using the Chen-Stein \cite{Chen75} method. Then, a fixed-point approach is used to show that the bound goes to 0 when the number of replicas goes to infinity.

We first note that even in the quadratic case, the $M-$replica-mean-field dynamics admit finite first moments. We state it in the following lemma and implicitly use it in what follows.
\begin{lemma}
\label{lem:finite_mean}
For all $x\in D_K,t\geq 0$ and $m\in\{1,\ldots,M\},$ when $\Phi$ is the quadratic autonomous evolution function, $\E[\lambda_m^{M,K}(x,t)]<\infty.$ 
\end{lemma}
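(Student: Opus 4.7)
The plan is to derive a closed differential inequality for $U_m(x,t) := \mathbb{E}[\lambda^{M,K}_m(x,t)]$ and then apply Gronwall's inequality. The starting point is Dynkin's formula (taking expectations in the integral version of \eqref{eq:spatial_GL_RMF}). Using that the $\mathcal{F}_t$-stochastic intensity of $N^{M,K}_{m,x}$ equals $\lambda^{M,K}_m(x,\cdot)$, the reset term in expectation becomes $\int_0^t\bigl(r(x)U_m(x,s)-\mathbb{E}[(\lambda^{M,K}_m(x,s))^2]\bigr)\diff s$, a computation valid at least for the process stopped at an exit time (see below).

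For the interaction contribution, I would combine two ingredients. First, the routing variables $V^M_n(y,x,s)$ are $\mathcal{F}_{s^-}$-independent and uniform on $\{1,\ldots,M\}\setminus\{n\}$, so each external spike is routed to replica $m$ with probability $1/(M-1)$, and the compensator of $\int_0^\cdot\one_{\{V^M_n(y,x,s)=m\}}N^{M,K}_{n,y}(\diff s)$ equals $\frac{1}{M-1}\int_0^\cdot\lambda^{M,K}_n(y,s)\diff s$. Second, the dynamics preserve $M$-exchangeability of $(\lambda^{M,K}_n(y,\cdot))_n$, inherited from the exchangeable initial law and the symmetric coupling, so $\mathbb{E}[\lambda^{M,K}_n(y,s)]$ does not depend on $n$. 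Summing over $n\neq m$ and $y\neq x$ yields an interaction term equal in expectation to $\frac{1}{K-1}\sum_{y\neq x}w(x,y)\int_0^t U_m(y,s)\diff s$. Adding the quadratic drift $\int_0^t (b+\mathbb{E}[\lambda^2])/\tau\,\diff s$, one observes that the $\mathbb{E}[\lambda^2]$ contributions from the quadratic drift and from the reset dampening combine with coefficient $(1-\tau)/\tau$, reflecting the competition between self-excitation and the self-spiking reset.

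In the natural regime where the reset dominates the quadratic blow-up, the combined $\mathbb{E}[\lambda^2]$ coefficient is non-positive and can be discarded in an upper bound. Introducing $V_m(t) := \max_{x\in D_K}U_m(x,t)$ and using boundedness of $w$ on $D_K\times D_K$ (a finite set inside the compact $D$), one gets a scalar inequality of the form $V_m'(t)\leq b/\tau+ C\, V_m(t)$ for a constant $C$ depending on $r$, $\|w\|_\infty$ and $K$. Gronwall's lemma, applied with the finite initial value $V_m(0)<\infty$ guaranteed by the assumption on the initial condition, yields $V_m(t)<\infty$ for all $t\geq 0$ and hence $U_m(x,t)<\infty$.

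The main obstacle I anticipate is justifying the formal manipulations \emph{a priori}, since the quadratic drift could in principle cause the first moment to be infinite and one cannot directly write $\mathbb{E}[\lambda^2]<\infty$. I would handle this by localization: set $\tau_L := \inf\{t\geq 0 : \max_{m,x}\lambda^{M,K}_m(x,t)\geq L\}$ and run the argument above for the stopped process $\lambda^{M,K}_m(\cdot,\cdot\wedge\tau_L)$, which is bounded by $L$ (plus the jump sizes $w(x,y)/(K-1)$, which are uniformly bounded) and therefore trivially has finite first moment. The resulting Gronwall bound is independent of $L$. The self-spiking reset mechanism, with intensity equal to $\lambda$ itself, forces a spike before the deterministic quadratic blow-up and gives $\tau_L\to\infty$ almost surely as $L\to\infty$; Fatou's lemma then transfers the $L$-independent bound to $U_m(x,t)$ itself, concluding the proof.
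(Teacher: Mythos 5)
Your argument follows essentially the same route as the paper's proof: take expectations in \eqref{eq:spatial_GL_RMF}, observe that the $+\E[\lambda^2]$ contribution of the quadratic drift is offset by the $-\E[\lambda^2]$ contribution coming from the compensator $\int_0^t(r(x)-\lambda)\lambda\,\diff s$ of the reset term, control the interaction term via boundedness of $w$ and exchangeability across replicas, and close with Gronwall. Two differences are worth recording. First, your localization step (stopping at $\tau_L$, deriving an $L$-independent Gronwall bound for the stopped process, and passing to the limit by Fatou) is a genuine improvement in rigor: the paper writes the moment identity directly for the unstopped process, which presupposes the finiteness of $\E[\lambda_m^2(x,s)]$ that the lemma is trying to establish; your version removes that circularity, modulo the (standard, but unaddressed in both proofs) non-explosion of the driving point processes needed for $\tau_L\to\infty$. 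Second, you keep track of the factor $1/\tau$: the net coefficient of $\E[\lambda^2]$ is $(1-\tau)/\tau$, which is non-positive only for $\tau\geq 1$. The paper silently drops $\tau$ (effectively setting $\tau=1$), so its cancellation is exact; your proof, exactly like the paper's, therefore does not cover $0<\tau<1$, where the net coefficient is positive and the moment inequality does not close. Since the lemma is stated for arbitrary $\tau>0$, you should either state the restriction $\tau\geq 1$ explicitly or supply a different argument for small $\tau$ (for instance an integrated, sharpened version of the tail estimate of \cref{lem_quad_tail_bound}); this is a real caveat, but one shared with, and hidden by, the paper's own proof. A final cosmetic point: the paper extremizes over $x\in D_K$ with an $\argmin$, which appears to be a typo; your choice $V_m(t)=\max_{x\in D_K}U_m(x,t)$ is the correct quantity for an upper bound over the finite set $D_K$.
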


\begin{proof}
To improve readability, we omit superscripts and write $\lambda_m$ for $\lambda_m^{M,K}$ in this proof.
By exchangeability between replicas, we have for any $x\in D_K$
\begin{equation*}
\begin{split}
\E[\lambda_m(x,t)]=&\E[\lambda_m(x,0)]+ct+\int_0^t\E[\lambda^2_m(x,s)] \diff s\\
&+ \sum_{y\in D_K\setminus\{x\}}w(x,y)\int_0^t \E[\lambda_m(y,s)]\diff s\\
&+r(x)\int_0^t \E[\lambda_m(x,s)]\diff s -\int_0^t \E[\lambda^2_m(x,s)]\diff s.
\end{split}
\end{equation*}
Using the boundedness of $w$ and setting $x(s)=\argmin_{y \in D_K} \E[\lambda_m(x,s)],$
we have the existence of a constant $K>0$ such that
\begin{equation*}
\E[\lambda_m(x(t),t)]\leq \E[\lambda_m(x(t),0)]+ct+(K+r(x(t)))\int_0^t \E[\lambda_m(x(s),s)]\diff s.
\end{equation*}
From Gronwall's lemma, we get that
\begin{equation*}
\E[\lambda_m(x(t),t)]\leq (\E[\lambda_m(x(t),0)]+cT')e^{(K+r(x(t)))T'},
\end{equation*}
which by definition of $x(t)$ and the assumption on the initial conditions in \cref{def:RMF_dynamics} gives that for any $t\in [0,T'],$
\begin{equation*}
\E[\lambda_m(x,t)]<\infty.
\end{equation*}

\end{proof}

Since the interaction term in the $M$-replica dynamics which corresponds to the third term in \eqref{eq:spatial_GL_RMF} is identical to the leaky case and \cref{lem:finite_mean} holds, there is no issue adapting the Chen-Stein bound from the original proof to the quadratic case. Namely, we have the following result:

\begin{lemma}
\label{lem_poisson_chenstein_bound_cFIAP}
Let $M>1$. Let $m \in \{1,\ldots,M\}$ and $x \in D_K$. For $y \in D_K\setminus\{x\}$ and $t\in \R^+$, let $A(y,x,t)=\sum_{n \neq m}\sum_{k\leq N_{n,y}^M([0,t))}w(x,y)\one_{V_n^M(y,x,t)=m}$ with $w:D_K\times D_K\rightarrow \R$ continuous, and let $\hat{A}(y,x,t)$ be independent Poisson random variables with means $\E[N_{1,y}^M([0,t))]$. Then, there exists a positive finite constant $K$ such that
\begin{equation}
\label{eq:chen_poisson_bound_cFIAP}
\begin{split}
&d_{TV}(A(y,x,t),\hat{A}(y,x,t)) \leq \\
&K\Bigg(\bigg(1\wedge\frac{0.74}{\sqrt{\E[N_{1,y}^M([0,t))]}}\bigg)\frac{1}{M-1}\E\left[\left|\sum_{n \neq m}\left(\E[N_{n,y}^M([0,t))]-N_{n,y}^M([0,t))\right)\right|\right]\\
&+\frac{1}{M-1}\left(1\wedge \frac{1}{\E[N_{1,y}^M([0,t))]}\right)\E[N_{1,y}^M([0,t))]\Bigg).
\end{split}
\end{equation}
\end{lemma}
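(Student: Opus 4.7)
I would adapt the Chen--Stein argument of \cite{davydov2023rmfcFIAP} to the present setting by a two-step Poisson approximation: first handle the routing variables by conditioning on the spike counts, and then handle the randomness of the spike counts themselves. Put $\mathcal{G} := \sigma\bigl(N_{n,y}^M([0,s]) : n\neq m,\,s\leq t\bigr)$ and $S := \sum_{n\neq m} N_{n,y}^M([0,t))$. By \cref{def:RMF_dynamics} the routing variable $V_n^M$ at each jump of each $N_{n,y}^M$ is uniform on $\{1,\ldots,M\}\setminus\{n\}$ and independent of the past, so conditionally on $\mathcal{G}$ the indicators $\{\one_{\{V_n^M=m\}}\}$ appearing in $A(y,x,t)$ form an i.i.d.\ family of Bernoullis with parameter $1/(M-1)$. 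Thus, absorbing the bounded weight $w(x,y)$ (continuous on the compact set $D_K\times D_K$) into the universal constant,
\begin{equation*}
\mathcal{L}\bigl(A(y,x,t)\mid\mathcal{G}\bigr) = \mathrm{Bin}\bigl(S,\,1/(M-1)\bigr).
\end{equation*}

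The next step is a triangle inequality for $d_{TV}$ that interposes $\mathrm{Po}(S/(M-1))$ between the conditional binomial and the target $\mathrm{Po}(\E[N_{1,y}^M([0,t))])$. For the inner leg, the classical Le Cam (Chen--Stein for i.i.d.\ Bernoullis) bound gives, conditionally on $\mathcal{G}$,
\begin{equation*}
d_{TV}\bigl(\mathrm{Bin}(S,1/(M-1)),\,\mathrm{Po}(S/(M-1))\bigr) \leq \frac{1}{M-1}\Bigl(1\wedge \tfrac{S}{M-1}\Bigr).
\end{equation*}
Taking expectation, using $\E[1\wedge X]\leq 1\wedge\E[X]$ together with $M$-exchangeability (which yields $\E[S]/(M-1)=\E[N_{1,y}^M([0,t))]$, finite by \cref{lem:finite_mean}), recovers after a trivial rewriting the second summand of \eqref{eq:chen_poisson_bound_cFIAP}.

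The outer leg compares two Poissons with random versus deterministic rates. I would invoke the standard Stein-factor estimate $d_{TV}(\mathrm{Po}(\lambda_1),\mathrm{Po}(\lambda_2)) \leq (1\wedge 0.74/\sqrt{\lambda_2})\,|\lambda_1-\lambda_2|$ applied with $\lambda_1 = S/(M-1)$ and $\lambda_2=\E[N_{1,y}^M([0,t))]$, which pulls the deterministic prefactor outside the expectation and leaves exactly $\frac{1}{M-1}\E\bigl|\sum_{n\neq m}(\E[N_{n,y}^M([0,t))]-N_{n,y}^M([0,t)))\bigr|$, the first summand of \eqref{eq:chen_poisson_bound_cFIAP}. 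Summing the two legs via the triangle inequality and combining with Step 2 completes the proof.

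The main obstacle is conceptual rather than computational: the conditional-independence claim in the first step requires arguing carefully that the routing variables attached to jumps of different processes $N_{n,y}^M$, and to distinct jumps of the same process, are mutually independent given $\mathcal{G}$. This follows from their $\mathcal{F}_t$-predictability combined with the explicit independence clause of \cref{def:RMF_dynamics}, but it has to be spelled out, for instance via a thinning argument on the stochastic intensity. Once secured, the rest is mechanical and essentially transcribes \cite{davydov2023rmfcFIAP}, because $\Phi$ enters only through the finiteness of first moments of the $N_{n,y}^M$, which \cref{lem:finite_mean} guarantees in both the leaky and quadratic regimes.
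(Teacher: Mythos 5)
First, note that the paper does not actually reprove this lemma: it imports the Chen--Stein bound wholesale from \cite{davydov2023rmfcFIAP}, checking only that the two ingredients it needs --- the interaction term being identical to the leaky case and the finiteness of first moments supplied by \cref{lem:finite_mean} --- survive in the quadratic regime. Your reconstruction produces a bound of the right shape and correctly identifies the two summands of \eqref{eq:chen_poisson_bound_cFIAP} as a random-rate-versus-deterministic-rate Poisson comparison plus a Le Cam binomial-to-Poisson term, and you are right that $\Phi$ enters only through \cref{lem:finite_mean}. But the step you yourself flag as the ``main obstacle'' is not a technicality to be spelled out later; it is a genuine gap, and the justification you sketch for it does not work.

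The claim that $\mathcal{L}(A(y,x,t)\mid\mathcal{G})=\mathrm{Bin}(S,1/(M-1))$ requires the routing indicators to be independent of $\mathcal{G}=\sigma(N^M_{n,y}([0,s]):n\neq m,\,s\leq t)$. \Cref{def:RMF_dynamics} only gives independence of each routing variable from the \emph{past} $\mathcal{F}_{s^-}$ at the time of the corresponding jump; it says nothing about the future, and $\mathcal{G}$ contains future information relative to each routing decision. The routing outcome at a spike of $N^{M,K}_{n,y}$ determines which replica's $\lambda(x,\cdot)$ is incremented, hence alters the subsequent spiking of $x$ in that replica, which in turn feeds back (through further routings) into the intensities $\lambda_{n'}(y,\cdot)$ and therefore into the very counts $N^M_{n,y}([0,t))$ you are conditioning on. So the conditional law of the indicators given $\mathcal{G}$ is in general biased away from i.i.d.\ Bernoulli$(1/(M-1))$, and predictability plus ``independence from the past'' cannot repair this. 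The argument in \cite{davydov2023rmfcFIAP} is organised precisely to avoid this conditioning: it runs the Chen--Stein method through the filtration, exploiting that the routed arrival process has $\mathcal{F}_t$-stochastic intensity $\frac{1}{M-1}\sum_{n\neq m}\lambda_n(y,\cdot)$, so that only independence from the past is ever used; the first summand of \eqref{eq:chen_poisson_bound_cFIAP} is exactly the price paid for the fluctuation of that compensator around its mean. To make your proof correct you would need to replace the exact conditional-binomial step by such a compensator-based (or Palm-calculus) Poisson approximation; as written, the foundational identity is false.
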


Similarly to \cite{davydov2023rmfcFIAP}, in order to prove \cref{th:RMF_PoC}, it is then sufficient to show that the random variables $(N_{1,y}^M([0,t)))_{y\in D_K}$ satisfy the following law of large numbers:
\begin{definition}
\label{def:TLLN}
Let $M \in \N$. Let $(X^M_n)_{1 \leq n\leq M}$ be $M$-exchangeable random variables with finite expectation. We say they satisfy an $\mathcal{L}^1$ triangular law of large numbers, which we denote $\TLLN(X^M_n)$, if there exists $C>0$ such that
\begin{equation}
\label{eq:TLLN_cFIAP}
\E\left[\left|\frac{1}{M-1}\sum_{n=1}^M (X^M_n-\E[X^M_n])\right|\right]\leq \frac{C}{\sqrt{M}}
\end{equation}
and there exists a random variable $\tilde{X}$ and a constant $D>0$ such that for all $1\leq n_1 \neq n_2\leq M$ and for all $B_1, B_2\in \mathcal{B}(\R),$
\begin{equation}
\label{eq:TLLN_cFIAP_2}
|\P(X^M_{n_1} \in B_1, X^M_{n_2}\in B_2)-\P(\tilde{X} \in B_1)\P(\tilde{X} \in B_2)|\leq \frac{D}{\sqrt{M}}.
\end{equation}
\end{definition}
In \cite{davydov2023rmfcFIAP}, this is proved using a fixed-point approach. Unfortunately, that approach crucially uses a Cramer condition (finiteness of exponential moments) to bound tail probabilities that a priori has no reason to hold in the quadratic case. Therefore, we will here proceed in two steps. First, we introduce some auxiliary RMF dynamics with a fixed threshold, for which it is easy to show the Cramer condition, and consequently the TLLN. Second, we show that for any $t\geq 0,$ it is possible to construct auxiliary dynamics that coincide with the quadratic dynamics on $[0,t]$, from which in turn we deduce the TLLN property for the PH dynamics with quadratic autonomous evolution \eqref{eq:spatial_GL_RMF}.

We start by defining the threshold $M$-replica quadratic GL dynamics on $D_K$ as a collection of $MK$ point processes $(N^{C,M,K}_{m,x})_{1\leq m \leq M,1\leq i\leq K}$ admitting $\mathcal{F}^K_t-$stochastic intensities $\lambda_{m}(x,t)$ satisfying the following system of equations:
\begin{equation}
\label{eq:spatial_threshold_GL_quad_RMF}
\begin{split}
\lambda^{C,M,K}_m(x,t)=&\lambda^{M,K}_m(x,0)+\frac{1}{\tau}\int_0^t\Phi(\lambda^{C,M,K}_m(x,s))\diff s\\
&+\frac{1}{K-1}\sum_{y \in D_K\setminus\{x\}}\sum_{n\neq m}w(x,y)\int_0^t \one_{\{V^M_n(y,x,s)=m\}} N^{C,M,K}_{n,y}(\diff s)\\
&+\int_0^t\left(r(x)-\lambda^{C,M,K}(x,s^-)\right) N^{C,M,K}_{m,x}(\diff s)\\
&+(r(x)-\lambda^{C,M,K}(x,t^-))\one_{\{\lambda^{C,M,K}(x,t)>C\}}, 
\end{split}
\end{equation}
where $C\in R^+$, $\{V^M_n(y,x,t)\}_{t\in \mathbb{R}}$ are $(\mathcal{F}_t)$-predictable processes for $1 \leq m \leq M, x,y \in D_K$ such that for each point $T^{C,K,M}_{n,y}$ of the process $N^{C,M,K}_{n,y}$, the real-valued random variables $\{V^M_n(y,x,t,T^{C,K,M}_{n,y})\}_y$ are mutually independent, independent from the past, i.e. from $\mathcal{F}_s$ for $s<T^{C,K,M}_{n,y},$ and uniformly distributed on $\{1,...,M\}\setminus\{n\}.$

De facto, the threshold dynamics \eqref{eq:spatial_threshold_GL_quad_RMF} are the same as the quadratic RMF dynamics \eqref{eq:spatial_GL_RMF}, with the addition of resets when the intensity reaches the deterministic threshold $C$. These dynamics are well-defined for the same reason that \eqref{eq:spatial_GL_RMF} are, and it is clear that they will satisfy the Cramer condition. Therefore, simply by following the proof for the linear/leaky case in \cite{davydov2023rmfcFIAP}, we obtain the following result:

\begin{lemma}
\label{lem_TLLN_threshold_dynamics}
Let $N^{C,M,K}_{m,x}$ be point processes defined as the threshold $M$-replica quadratic GL dynamics on $D_K$ introduced in \eqref{eq:spatial_threshold_GL_quad_RMF}. Then, for any $t\geq 0$ and any $x\in D_K,$ $\TLLN(N^{C,M,K}_{1,x}([0,t)))$ holds.
\end{lemma}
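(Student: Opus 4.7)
The plan is to mimic the fixed-point strategy used for the leaky case in \cite{davydov2023rmfcFIAP}, exploiting that the threshold $C$ supplies the Cramer-type exponential moment bound on $N^{C,M,K}_{m,x}([0,t))$ which is unavailable a priori for the pure quadratic dynamics \eqref{eq:spatial_GL_RMF}.

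First, I would establish a uniform a priori bound on the stochastic intensity. Between successive jumps---interaction jumps of size $w(x,y)/(K-1)$, own-spike resets to $r(x)$, or threshold-driven resets to $r(x)$---the intensity satisfies $\dot\lambda = (b+\lambda^2)/\tau$. Since every crossing of $C$ triggers an immediate reset and interaction jumps are bounded by $\|w\|_\infty/(K-1)$, one has $\lambda^{C,M,K}_m(x,t) \leq C + \|w\|_\infty/(K-1)$ almost surely, uniformly in $M$ and $t$. Consequently $N^{C,M,K}_{m,x}([0,t))$ is stochastically dominated by a Poisson variable with parameter of order $Ct$, yielding uniform-in-$M$ bounds on all exponential moments of $N^{C,M,K}_{m,x}([0,t))$---the Cramer condition.

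With the Cramer condition in hand the argument of \cite{davydov2023rmfcFIAP} transfers. For \eqref{eq:TLLN_cFIAP}, the $M$-exchangeability of the initial conditions (which is preserved by the dynamics) combined with \eqref{eq:chen_poisson_bound_cFIAP} yields a self-bounding inequality for $\psi_M(t) := \E[|\frac{1}{M-1}\sum_{n=1}^M (N^{C,M,K}_{n,y}([0,t)) - \E[N^{C,M,K}_{n,y}([0,t))])|]$: the interaction intensity of any single replica decomposes into $M-1$ contributions which the Chen-Stein bound approximates by independent Poisson variables up to a remainder of the form $C_1/\sqrt{M} + C_2\int_0^t \psi_M(s)\,ds$, where the Cramer bound is used to control the tail probabilities appearing in the Chen-Stein remainder. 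Gr\"onwall's lemma then gives $\psi_M(t)\leq C/\sqrt{M}$, establishing \eqref{eq:TLLN_cFIAP}. The pairwise near-independence \eqref{eq:TLLN_cFIAP_2} follows by applying the same coupling argument to the joint law of two replicas, using that the random routings $\{V^M_n(y,x,\cdot)\}$ are uniform on $\{1,\ldots,M\}\setminus\{n\}$ and independent across jumps, so the joint distribution factorizes up to an error of order $1/M$.

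The principal technical checkpoint is precisely the a priori boundedness of the intensity, which is exactly why the threshold mechanism is engineered into \eqref{eq:spatial_threshold_GL_quad_RMF}. Once this bound is available, the quadratic drift $b+\lambda^2$ is dominated by a constant along every trajectory and behaves, from the point of view of the fixed-point recursion, no worse than the bounded drift of the leaky case; the remaining estimates are structural and require no new ingredients beyond \cref{lem:finite_mean} and \cref{lem_poisson_chenstein_bound_cFIAP}.
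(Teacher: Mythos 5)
Your proposal matches the paper's argument: the paper likewise observes that the threshold mechanism forces the intensity to stay bounded (hence the Cramer condition holds) and then invokes the leaky-case proof of \cite{davydov2023rmfcFIAP} verbatim, exactly as you do. Your write-up is in fact more explicit than the paper's, which states the Cramer condition as ``clear'' and gives no further detail on the fixed-point/Gr\"onwall step.
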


Next, we show the following tail bound for the RMF dynamics with quadratic autonomous evolution \eqref{eq:spatial_GL_RMF}. This will be afterwards be used to couple them with the threshold dynamics.

\begin{lemma}
\label{lem_quad_tail_bound}
Let $L>0.$ Then, for any $x\in D_K$ and $t\geq 0, \P(\lambda^{M,K}_m(x,t)>L)\leq \frac{1}{\sqrt{1+L^2}},$ where $\lambda^{M,K}_m(x,t)$ is defined by \eqref{eq:spatial_GL_RMF} when the autonomous evolution is quadratic.
\end{lemma}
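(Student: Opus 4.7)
The natural way to produce a tail bound of the stated shape is a Markov-type inequality applied to the Lyapunov function
\[
V(\lambda):=\sqrt{1+\lambda^2},\qquad V'(\lambda)=\frac{\lambda}{\sqrt{1+\lambda^2}}\in[0,1),
\quad V''(\lambda)=\frac{1}{(1+\lambda^2)^{3/2}}>0.
\]
Since $V$ is strictly increasing, Markov's inequality gives
\[
\P\bigl(\lambda^{M,K}_m(x,t)>L\bigr)\ \leq\ \frac{\E[V(\lambda^{M,K}_m(x,t))]}{V(L)}\ =\ \frac{\E[V(\lambda^{M,K}_m(x,t))]}{\sqrt{1+L^2}},
\]
so the entire task reduces to proving that $\E[V(\lambda^{M,K}_m(x,t))]$ is bounded by a finite constant (uniformly in $M$). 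The uniformity in $M$ is the feature that will be used later to couple the quadratic dynamics with the threshold dynamics of \eqref{eq:spatial_threshold_GL_quad_RMF}.

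The plan is then to apply Dynkin's formula (It\^o's formula for jump processes) to $V(\lambda^{M,K}_m(x,\cdot))$ and to control each of the three contributions that appear. First, the continuous autonomous drift contributes
\[
V'(\lambda)\,\frac{b+\lambda^2}{\tau}\ =\ \frac{\lambda(b+\lambda^2)}{\tau\sqrt{1+\lambda^2}},
\]
which behaves like $\lambda^2/\tau$ for large $\lambda$. Second, the reset jumps from $\lambda$ to $r(x)$ occur at intensity $\lambda$ and produce an expected increment
\[
\bigl(V(r(x))-V(\lambda)\bigr)\,\lambda,
\]
which is non-positive and scales like $-\lambda^2$ for large $\lambda$. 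Third, the interaction jumps from the other replicas raise $\lambda^{M,K}_m(x,\cdot)$ by $w(x,y)/(K-1)$ at total rate $\tfrac{1}{M-1}\sum_{n\neq m}\lambda^{M,K}_n(y,t)$, and by convexity of $V$ together with $V'\leq 1$ each such jump contributes at most $w(x,y)/(K-1)$ to $V$. Averaging and invoking the exchangeability between replicas (\cref{def:RMF_dynamics}) collapses the interaction rate to $\tfrac{1}{K-1}\sum_y w(x,y)\,\E[\lambda^{M,K}_m(y,t)]$, which is controlled by \cref{lem:finite_mean}.

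Summing these three contributions, the crucial cancellation is that the $+\lambda^2/\tau$ coming from the quadratic drift is dominated (asymptotically in $\lambda$) by the $-\lambda^2$ coming from the reset term, so that the pointwise generator $\mathcal{L}V(\lambda)$ admits an upper bound of the form $\alpha+\beta V(\lambda)$ for constants $\alpha,\beta>0$ depending on $b$, $\tau$, $r$ and the $L^\infty$ norm of $w$, but \emph{not} on $M$. Taking expectations, exchangeability in the replica index closes the system into a scalar Gronwall inequality
\[
\E[V(\lambda^{M,K}_m(x,t))]\ \leq\ \E[V(\lambda^{M,K}_m(x,0))]+\int_0^t\bigl(\alpha+\beta\,\E[V(\lambda^{M,K}_m(x,s))]\bigr)\diff s,
\]
yielding a bound $C=C(t)$ independent of $M$ and concluding the proof once it is fed back into the Markov inequality above.

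The main technical obstacle will be step two: it is not obvious a priori that the generator $\mathcal{L}V$ can really be controlled, because the autonomous quadratic drift grows at the same order in $\lambda$ as the stabilising reset term. The argument must be done carefully enough to extract the correct sign from the combination $V'(\lambda)(b+\lambda^2)/\tau+(V(r(x))-V(\lambda))\lambda$, and to ensure that the constants that appear are uniform in $M$ so that the bound can subsequently be used in the coupling with \eqref{eq:spatial_threshold_GL_quad_RMF}. The other subtlety is that \cref{lem:finite_mean} only provides finiteness of $\E[\lambda^{M,K}_m(x,t)]$ rather than a uniform-in-$M$ bound, so the inequality above must be closed purely in terms of $V$ itself and not rely on the linear moment being uniformly bounded.
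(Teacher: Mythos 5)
Your route is genuinely different from the paper's. The paper does not use a Lyapunov function: it invokes the Poisson embedding representation (\cref{lem_poisson_embedding}) and argues pathwise that on the event $\{\lambda^{M,K}_m(x,t)>L\}$ the process must have climbed to level $L$ since its last reset without the embedding producing a point under its graph; comparing that graph with the explicit solution $s\mapsto\tan(s)$ of $\dot u=1+u^2$ reduces the probability to $\exp\bigl(-\int_0^{\arctan L}\tan u\,\diff u\bigr)=1/\sqrt{1+L^2}$. That argument exploits the specific structure of the model (reset intensity equal to the state, explosive quadratic growth between resets) and yields a clean, time-uniform constant equal to $1$. Your Markov-plus-Lyapunov scheme can never reproduce that constant: since $V\ge 1$ you have $\E[V(\lambda^{M,K}_m(x,t))]\ge 1$, and the Gronwall closure makes the constant grow like $e^{\beta t}$. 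So at best you prove $\P(\lambda^{M,K}_m(x,t)>L)\le C(t)/\sqrt{1+L^2}$, a strictly weaker statement than \cref{lem_quad_tail_bound}. That weaker form is still uniform in $M$ and $L$ and would suffice for the coupling with the threshold dynamics in the proof of \cref{th:RMF_PoC}, but it should be presented as such, not as a proof of the lemma as stated.

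The substantive gap is the step you yourself flag as ``the main technical obstacle'' and leave unresolved: the sign of the generator. The exact computation is
\[
V'(\lambda)\,\frac{b+\lambda^2}{\tau}+\bigl(V(r(x))-V(\lambda)\bigr)\lambda
=\frac{\lambda}{\sqrt{1+\lambda^2}}\Bigl(\frac{b+\lambda^2}{\tau}-(1+\lambda^2)\Bigr)+\lambda\sqrt{1+r(x)^2},
\]
whose leading term is $(\tfrac{1}{\tau}-1)\lambda^2$. For $\tau=1$ the cancellation is exact and the generator is bounded by $|b-1|+\sqrt{1+r(x)^2}\,V(\lambda)$, so the plan goes through (this is consistent with the paper, whose proof of \cref{lem:finite_mean} also implicitly cancels the $\E[\lambda^2]$ terms, i.e.\ takes $\tau=1$). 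For $\tau<1$, however, the generator grows like $\lambda^2$ and is not dominated by $\alpha+\beta V$; worse, Jensen applied to the first-moment equation gives $\tfrac{\diff}{\diff t}\E[\lambda]\ge(\tfrac{1}{\tau}-1)(\E[\lambda])^2$, so the mean can explode in finite time and no Lyapunov bound of this shape can hold. You must therefore state the restriction on $\tau$ explicitly and carry the computation out. Finally, the application of Dynkin's formula needs a localization (stop at $\inf\{s:\lambda\ge R\}$, apply the drift inequality and Gronwall, then let $R\to\infty$ by Fatou), since the quadratic drift makes the a priori integrability of $\int_0^t\lambda(s)^2\diff s$ nontrivial.
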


Before proving this result, we recall the following Poisson embedding representation for point processes with a stochastic intensity \cite[Section 3]{BremMass96}:
\begin{lemma}
\label{lem_poisson_embedding}
Let $N$ be a point process on $\mathbb{R}$. Let $(\mathcal{F}_t)$ be the internal history of $N$. Suppose $N$ admits a $(\mathcal{F}_t)$-stochastic intensity $\{\mu(t)\}_{t \in \mathbb{R}}.$ Then there exists a Poisson point process $\overline{N}$ with intensity 1 on $\mathbb{R}^2$ such that for all $B \in \mathcal{B}(\mathbb{R}),$
\begin{equation*}
    N(B)=\int_{B \times \mathbb{R}}\one_{[0, \mu(s)]}(u)\overline{N}(\diff s\times \diff u).
\end{equation*}
\end{lemma}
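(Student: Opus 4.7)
The approach is the classical construction of the Poisson embedding: on an enlarged probability space, realise $\overline{N}$ as the superposition of the atoms of $N$ equipped with random heights drawn uniformly below the graph of $\mu$, together with an independent Poisson random measure filling in the region above the graph. Once this is carried out, the thinning identity will be immediate from the construction, and the only substantive work is verifying that the superposition is Poisson with unit intensity on $\R^2$.

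First, I would enlarge $(\Omega, \mathcal{F}, \P)$ to support the following two pieces of randomness, independent of $\mathcal{F}_\infty$: (i) for each atom $T_n$ of $N$, an independent random variable $U_n$ uniform on $[0, \mu(T_n{-})]$, using the predictable left-limit version of $\mu$ (the degenerate case $\mu(T_n{-})=0$ is avoided almost surely, since $N$ then has no atom at $T_n$); and (ii) an auxiliary Poisson random measure $M$ on $\R^2$ of unit intensity. I would then set
\[
  \overline{N}(\diff s \times \diff u) := \sum_n \delta_{(T_n, U_n)}(\diff s \times \diff u) + \one_{\{u > \mu(s{-})\}}\, M(\diff s \times \diff u).
\]
Because the two summands are supported on the disjoint regions $\{u \leq \mu(s{-})\}$ and $\{u > \mu(s{-})\}$, there is no double-counting.

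The main step is to identify the compensator of $\overline{N}$. By the marking theorem for point processes with stochastic intensities (as in Br\'emaud), the marked process $\sum_n \delta_{(T_n, U_n)}$ on $\R\times\R_+$ admits predictable intensity kernel $\one_{[0, \mu(s{-})]}(u)\, \diff s\, \diff u$; this reduces, for any nonnegative predictable test function $H(s, u)$, to the identity $\E\bigl[\sum_n H(T_n, U_n)\bigr] = \E\bigl[\int H(s, u)\one_{[0, \mu(s{-})]}(u)\, \diff s\, \diff u\bigr]$, which follows by conditioning on $N$, using the explicit conditional law of the $U_n$, and invoking the defining property of $\mu$ as stochastic intensity. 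The auxiliary contribution $\one_{\{u > \mu(s{-})\}}\, M$ has predictable intensity $\one_{\{u > \mu(s{-})\}}\, \diff s\, \diff u$ with respect to a suitable enlargement of $(\mathcal{F}_t)$. Summing, $\overline{N}$ has deterministic compensator $\diff s\, \diff u$, and Watanabe's characterization identifies $\overline{N}$ as Poisson with unit intensity on $\R^2$.

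The thinning identity then drops out from the construction: for $B \in \mathcal{B}(\R)$,
\[
  \int_{B \times \R} \one_{[0, \mu(s)]}(u)\, \overline{N}(\diff s\times \diff u) = \sum_n \one_{\{T_n \in B\}}\, \one_{\{U_n \leq \mu(T_n)\}} = N(B),
\]
since $U_n \leq \mu(T_n{-}) \leq \mu(T_n)$ almost surely, and the $M$-contribution vanishes in the integrand. The principal obstacle is the intensity computation in the preceding step: one must track the correct predictable version of $\mu$ and the correct filtration enlargement so that the marking theorem applies, because the mark distribution itself depends on $\mu$. Once this bookkeeping is handled, the rest is routine.
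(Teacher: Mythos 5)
The paper does not actually prove this lemma: it is recalled verbatim from Br\'emaud--Massouli\'e (the reference \cite{BremMass96}, Section 3) and used as a black box, so there is no internal proof to compare against. Your construction is the standard ``inverse embedding'' argument from that reference --- attach uniform marks below the graph of the predictable intensity to the atoms of $N$, fill in the complement with an independent unit Poisson measure, identify the compensator of the superposition as Lebesgue measure, and conclude by the marked-point-process version of Watanabe's characterization --- and it is essentially sound. Two small points deserve attention. First, as written your auxiliary term $\one_{\{u>\mu(s-)\}}\,M$ only covers $u>\mu(s-)$, so the superposition is a unit Poisson process on $\R\times\R_+$ rather than on $\R^2$ as the lemma asserts; you should fill in $\{u\notin[0,\mu(s-)]\}$, i.e.\ also the half-plane $u<0$. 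Second, the inequality $\mu(T_n-)\le\mu(T_n)$ that you invoke in the final display is false in general --- intensities can jump \emph{down} at an atom, and indeed in the reset dynamics of this very paper $\lambda$ jumps to $r(x)$ at each spike, possibly below its left limit. The statement is rescued only because a stochastic intensity is by convention taken in its predictable version, so that the $\mu(s)$ appearing in the thinning indicator is the same left-limit quantity used to draw the marks; you gesture at this bookkeeping in your closing remark, but the proof should use that predictable version consistently in both places rather than rely on a monotonicity that does not hold.
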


\begin{proof}[Proof of \cref{lem_quad_tail_bound}]
Denote $\tau^{K,M}_{m,x}(t)=\max(\sup_{s\in [0,t)} \{s\in N^{K,M}_{m,x}\},0)$ the last reset time before $t$, that is, the last point of the point process $N^{K,M}_{m,x}$ before $t.$ Note that two points of $N^{K,M}_{m,x}$ are at distance at most $\frac{\pi}{2}$ due to the quadratic autonomous evolution, therefore $\tau^{K,M}_{m,x}(t)$ is always greater than 0 if $t>\frac{\pi}{2}.$
Consider $\overline{N}^{M,K}_{m,x}$ the Poisson embedding of $N^{M,K}_{m,x}$ as in \cref{lem_poisson_embedding}. It is then clear that for $L>0,$ the event ``$\lambda^{M,K}_m(x,t)>L$" coincides with the event ``there are no points of $\overline{N}^{M,K}_{m,x}$ under the curve of $\lambda^{M,K}_m(x,t)$ between $\tau^{K,M}_{m,x}(t)$ and $t$".
This observation leads to the fact that the event ``$\lambda^{M,K}_m(x,t)>L$" is included in the event ``there are no points of $\overline{N}^{M,K}_{m,x}$ under the curve of $s\rightarrow t-\arctan(L)+\tan(s)$ between $t-\arctan(L)$ and $t$".

Therefore,
\begin{equation*}
\P(\lambda^{M,K}_m(x,t)>L)\leq e^{-\int_{0}^{\arctan(L)}\tan(x)\diff x}.
\end{equation*}
An elementary computation gives $\int_{0}^{\arctan(L)}\tan(x)\diff x=-\log(\frac{1}{\sqrt{1+L^2}}).$
This leads to
\begin{equation*}
\P(\lambda^{M,K}_m(x,t)>L)\leq \frac{1}{\sqrt{1+L^2}},
\end{equation*}
which completes the proof.
\end{proof}

\begin{proof}[Proof of \cref{th:RMF_PoC}]
For $L>0,$ let $\tau_L=\inf \{t\geq 0: \lambda_{m,i}(t)\geq L\}.$ By \cref{lem_quad_tail_bound}, $\tau_L\rightarrow \infty$ when $L\rightarrow \infty.$ For a process $X$ and $\tau$ a stopping time, denote $X_\tau(t)=X_{\min(t,\tau)}$ the process stopped at $\tau.$ 
By definition of $\tau_L,$ for every $t\geq 0$ and every $x\in D_K, \lambda^{M,K}_{\tau_L,m}(x,t)=\lambda^{L,M,K}_{\tau_L,m}(x,t).$
Combining these two observations, we have the existence of $C>0$ s.t. for all $t>0,$
\begin{equation}
\label{eq:threshold_dyn_equality}
\P(\lambda^{M,K}_m(x,t)=\lambda^{C,M,K}_m(x,t))=1.
\end{equation}

To show \cref{th:RMF_PoC}, we need to show $\TLLN(N_{m,x}^{M,K}(t))$ for every $t>0.$
Using the triangular inequality,
\begin{equation*}
\begin{split}
\E\big[|\frac{1}{M-1\sum_{n=1}^M(N_{m,x}^{M,K}(t)-\E[N_{m,x}^{M,K}(t)])}|\big]\leq& \frac{1}{M-1}\big(\sum_{n=1}^M\E[N_{m,x}^{M,K}(t)-N_{m,x}^{C,M,K}(t)]\\
    &+\sum_{n=1}^M\E[N_{m,x}^{C,M,K}(t)-\E[N_{m,x}^{C,M,K}(t)]]\\
    &+\sum_{n=1}^M(\E[N_{m,x}^{C,M,K}(t)]-\E[N_{m,x}^{M,K}(t)])\big).
\end{split}
\end{equation*}
with $C$ as in \eqref{eq:threshold_dyn_equality}.
The first term is equal to 0 by \eqref{eq:threshold_dyn_equality}. There exists a constant $K>0$ s.t. the second term is bounded by $\frac{K}{\sqrt{M}}$ by \cref{lem_TLLN_threshold_dynamics}. From the explicit derivation of $K(C)$ detailed in \cite{davydov2023rmfcFIAP} The third term can be made arbitrarily small by \cref{lem_quad_tail_bound}. This completes the proof.
\end{proof}
\begin{acks}
MD would like to thank Kavita Ramanan for helpful discussions and feedback. MD was supported by the Office of Naval Research under the Vannevar Bush Faculty Fellowship N0014-21-1-2887. 
DA acknowledges funding from the Dutch Research Council through the NWO Grant WC.019.009
“Spatio-temporal canards and delayed bifurcations in continuous neurobiological
networks”. DA’s work was also supported by the National Science Foundation under
Grant No. DMS-1929284 while the author was in residence at the Institute for
Computational and Experimental Research in Mathematics in Providence, RI, during the
“Math + Neuroscience: Strengthening the Interplay Between Theory and Mathematics"
programme.
\end{acks}

\bibliographystyle{siamplain}
\bibliography{biblio}
\end{document}